\newcommand{\mathsym}[1]{{}}
\newtheorem{thm}{Theorem}[section]
\newtheorem{lemma}[thm]{Lemma}
\theoremstyle{definition}
\newtheorem{rem}[thm]{Remark}
\begin{document}

\title[Lower bound for a polynomial]
 {Lower Bound for a Polynomial on a product of hyperellipsoids using geometric programming}
 \date{}

\author{Mehdi Ghasemi}
\author{Murray Marshall}
\address{Department of Mathematics and Statistics,
University of Saskatchewan,
Saskatoon, \newline \indent
SK S7N 5E6, Canada}
\email{mehdi.ghasemi@usask.ca, marshall@math.usask.ca}

\keywords{Positive polynomials, Sums of Squares, Geometric Programming, Hyperellipsoids}
\subjclass[2010]{Primary 12D99 Secondary 14P99, 90C22}

\begin{abstract} Let $f$ be a polynomial in $n$ variables $x_1,\dots,x_n$ with real coefficients. In \cite{gha-mar2} Ghasemi and Marshall give an algorithm, based on geometric programming, which computes a lower bound for $f$ on $\mathbb{R}^n$. In \cite{gha-lass-mar} Ghasemi, Lasserre and Marshall show how the algorithm in \cite{gha-mar2} can be modified to compute a lower bound for $f$ on the hyperellipsoid $\sum_{i=1}^n x_i^d \le M.$ Here  $d$ is a fixed even integer, $d \ge \max\{ 2, \deg(f)\}$ and $M$ is a fixed positive real number. Suppose now that $g_j := 1-\sum_{i\in I_j} (\frac{x_i}{N_i})^d$, $j=1,\dots,m$, where $d$ is a fixed even integer $d \ge \max\{ 2, \deg(f)\}$, $N_i$ is a fixed positive real number, $i=1,\dots,n$ and  $I_1,\dots, I_m$ is a fixed partition of $\{ 1,\dots,n\}$. The present paper gives an algorithm based on geometric programming for computing a lower bound for $f$ on the subset of $\mathbb{R}^n$ defined by the inequalities $g_j\ge 0$, $j=1,\dots,m$. The algorithm is implemented in a SAGE program developed by the first author. The bound obtained is typically not as sharp as the bound obtained using semidefinite programming, but it has the advantage that it is computable rapidly, even in cases where the bound obtained by semidefinite programming is not computable. When $m=1$ and $N_i = \root d \of{M}$, $i=1,\dots,n$ the algorithm produces the lower bound obtained in \cite{gha-lass-mar}. When $m=n$ and $I_j = \{ j \}$, $j=1,\dots,n$ the algorithm produces a lower bound for $f$ on the hypercube $\prod_{i=1}^n [-N_i,N_i]$, which in certain cases can be computed by a simple formula.
\end{abstract}

\maketitle
%
\section{Introduction}
%
Let $f,g_1,\dots,g_m$ be elements of the polynomial ring $\mathbb{R}[\mathbf{x}]=\mathbb{R}[x_1,\cdots,x_n]$ and let
\[
	K_{\mathbf{g}} := \{ \mathbf{x} \in \mathbb{R}^n : g_j(\mathbf{x})\ge 0, \, j=1,\dots,m\}.
\]
Here,  $\mathbf{g} := (g_1,\dots, g_m)$. We refer to $K_{\mathbf{g}}$ as the basic closed semialgebraic set generated by
$\mathbf{g}$. Observe that if $m=0$, then $\mathbf{g}=\emptyset$ and $K_{\mathbf{g}}= \mathbb{R}^n$. Let
\[
	f_{*,\mathbf{g}}:= \inf\{ f(\mathbf{x}) : \mathbf{x} \in K_{\mathbf{g}}\}.
\]
One would like to have a simple algorithm for computing a lower bound for $f$ on $K_{\mathbf{g}}$, i.e., a lower bound
for $f_{*,\mathbf{g}}$. Lasserre's algorithm \cite{lasserre1} is such an algorithm. It produces a hierarchy of lower bounds
\[
	f_{\operatorname{sos},\mathbf{g}}^{(t)} = \sup \{ r \in \mathbb{R} : f-r = \sum_{j=0}^m \sigma_jg_j, \sigma_j \in \sum \mathbb{R}[\mathbf{x}]^2, \deg(\sigma_jg_j)\le t, j=0,\dots,m \}
\]
for $f$ on $K_{\mathbf{g}}$, one for each integer $t\ge \max\{ \deg(f), \deg(g_j): j=1,\dots,m\}$, which are computable
by semidefinite programming. Here, $g_0:=1$ and $\sum \mathbb{R}[\mathbf{x}]^2$ denotes the set of elements of
$\mathbb{R}[\mathbf{x}]$ which are sums of squares. There is also a variant of the algorithm in \cite{lasserre1}, due to Waki et al, which is useful in certain cases; see \cite{WKKM}.

The algorithm in \cite{gha-mar2} deals with the case $m=0$, producing a lower bound $f_{\operatorname{gp}}$ for $f$
on $\mathbb{R}^n$ computable by geometric programming.\footnotemark\footnotetext{A function $\phi: (0,\infty)^n\rightarrow\mathbb{R}$ of the form $\phi(\underline{x})=cx_1^{a_1}\cdots x_n^{a_n}$,
where $c>0$, $a_i\in\mathbb{R}$ and $\underline{x}=(x_1,\ldots,x_n)$ is called a \it monomial function. \rm A sum of monomial functions
is called a \it posynomial function. \rm An optimization problem of the form
\[
	\left\lbrace
	\begin{array}{ll}
		\textrm{Minimize } & \phi_0(\underline{x})  \\
		\textrm{Subject to} & \phi_i(\underline{x})\leq 1, \ i=1,\ldots,m \text{ and } \  \psi_j(\underline{x})=1, \ j=1,\ldots,p
	\end{array}
	\right.
\]
where $\phi_0,\ldots,\phi_m$ are posynomials and $\psi_1,\ldots,\psi_p$ are monomial functions, is called a \textit{geometric program}. See \cite{boyd} and \cite{gha-mar2}.}
See \cite{fidalgo}, \cite{gha-mar1} and \cite{lasserre} for
precursors of \cite{gha-mar2}. See \cite{DT} and \cite{IW} for extensions of these results.  The algorithm in \cite{gha-lass-mar} is a variation of the algorithm in \cite{gha-mar2}
which deals with the case $m=1$, $g_1 = M-\sum_{i=1}^n x_i^d$, i.e., it produces a lower bound for $f$ on the
hyperellipsoid $\sum_{i=1}^n x_i^d\le M$. Here, $d$ is an even integer, $d \ge \max\{ 2, \deg(f)\}$. Again, this lower bound is computable by geometric programming.

Although the bounds obtained in \cite{gha-lass-mar} and \cite{gha-mar2} are typically not as sharp as the bounds obtained in \cite{lasserre1} and \cite{WKKM}, the computation is much faster, especially when the coefficients are sparse, and problems where the number of variables and the degree are large (problems where the methods in \cite{lasserre1} and \cite{WKKM} break down completely) can be handled easily.

For example, the exact minimum of $f=x_1^{40}+x_2^{40}+x_3^{40}-x_1x_2x_3$ on $\mathbb{R}^3$ can be computed in about $0.18$ seconds, using the geometric programming method.
But even for such a simple example, the methods in \cite{lasserre1} and \cite{WKKM} break down completely; see \cite[Example 3.9]{gha-mar2}.

In the present paper we establish a lower bound for $f$ on $K_{\mathbf{g}}$, computable by geometric
programming, in case $g_j := 1-\sum_{i\in I_j} (\frac{x_i}{N_i})^d$, $j=1,\dots,m$, where $d$ is a fixed even integer $d \ge \max\{ 2, \deg(f)\}$, the $N_i$ are fixed positive real numbers, and  $I_1,\dots, I_m$ is a fixed partition of $\{ 1,\dots,n\}$; see Theorem \ref{the algorithm}. As in \cite{gha-lass-mar} and \cite{gha-mar2}, the algorithm exploits automatically any sparsity of coefficients.
In case $m=1$ and $N_i = \root d \of{M}$, $i=1,\dots, n$, the bound coincides with the one obtained in \cite{gha-lass-mar}, although the algorithm is not quite the same. (Our algorithm is actually  simpler.)

See Tables 1, 2 and 3 for runtime and relative error computations. The source code of a SAGE program, developed by the first author, which computes the lower bound described in Theorem \ref{the algorithm}, is available at \href{https://github.com/mghasemi/CvxAlgGeo}{github.com/mghasemi/CvxAlgGeo}.

In the special case where $m=n$ and $I_j = \{ j \}$, $j=1,\dots,n$, $K_{\bf g}$ coincides with the hypercube $\prod_{j=1}^n [-N_j,N_j]$. Theorem \ref{hypercube case} relates the lower bound obtained by applying our algorithm in this special case to a variant of the trivial lower bound introduced in \cite[Section 3]{gha-lass-mar}.

It needs to be recorded, finally, that the present paper is nothing more or less than a shortened more focused version of the paper \cite{gha-mar3}. The reader may wish to look at the latter paper, where the algorithm described in the present paper is extended to cover a large number of additional cases.
\section{the case $m=0$}
We recall the algorithm established in \cite{gha-mar2}. We need some notation. Fix an even integer $d\ge 2$. Let
$\mathbb{N}^n_d:=\{\alpha\in\mathbb{N}^n:\vert\alpha\vert\leq d\}$ where $\vert\alpha\vert=\sum_i\alpha_i$ for every
$\alpha\in\mathbb{N}^n$. Let $\epsilon_i:=(\epsilon_{i1},\cdots,\epsilon_{in})\in\mathbb{N}^n$, where $\epsilon_{ij}= \begin{cases} d \text{ if } i=j \\ 0 \text{ if } i \ne j \end{cases}$ and, given $f= \sum f_{\alpha} \mathbf{x}^{\alpha}\in\mathbb{R}[\mathbf{x}]$, $\deg(f)\le d$, let:
\[
\begin{array}{lcl}
\Omega(f)&:=&\{\alpha\in\mathbb{N}^n_{d}\::\:f_\alpha\neq0\}\setminus\{0,\epsilon_1,\cdots,\epsilon_n\}\\
\Delta(f)&:=&\{\alpha\in\Omega(f)\::\:f_\alpha\,\mathbf{x}^\alpha\mbox{ is not a square in }\mathbb{R}[\mathbf{x}]\}\\
\Delta(f)^{< d}&:=&\{\alpha\in\Delta(f)\::\:\vert\alpha\vert <d\}\\
\Delta(f)^{= d}&:=&\{\alpha\in\Delta(f)\::\:\vert\alpha\vert =d\}.
\end{array}
\]
Denote the coefficient $f_{\epsilon_i}$ by $f_{d,i}$ for $i=1,\dots,n$. One is most interested in the case where $\deg(f)=d$.
\begin{thm} \cite[Theorem 3.1]{gha-mar2}
\label{thm1}
Let $f\in\mathbb{R}[\mathbf{x}]$, $\deg(f)\le d$, and let $\rho(f)$ denote the optimal value of the program:
\begin{equation}
\label{lw}
\left\{
\begin{array}{lll}
Minimize & \sum\limits_{\alpha\in\Delta(f)^{<d}}(d-\vert\alpha\vert)
\left[\left(\frac{f_\alpha}{d}\right)^{d}\,\left(\frac{\alpha}{\mathbf{z}_\alpha}\right)^\alpha\right]^{1/(d-\vert\alpha\vert)} & \\
\mbox{s.t.} & \sum\limits_{\alpha\in\Delta(f)}z_{\alpha,i}\le f_{d,i}, & i=1,\ldots,n\\
& \left(\frac{\mathbf{z}_\alpha}{\alpha}\right)^\alpha =\left(\frac{f_\alpha}{d}\right)^{d}, & \alpha\in\Delta(f)^{=d}
\end{array}\right.
\end{equation}
where, for every $\alpha\in\Delta(f)$, the unknowns $\mathbf{z}_\alpha = (z_{\alpha,i})\in[0,\infty)^n$ satisfy $z_{\alpha,i}= 0$
if and only if $\alpha_i=0$. Then $f-f(0)+\rho(f)$ is a sum of binomial squares. In particular,
$f_{\operatorname{gp}} := f(0)-\rho(f)$ is a lower bound for $f$ on $\mathbb{R}^n$.
\end{thm}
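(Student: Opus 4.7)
The proof proceeds by applying a weighted arithmetic-geometric mean inequality to each ``bad'' monomial $f_\alpha\mathbf{x}^\alpha$ ($\alpha\in\Delta(f)$) and then summing. The variables $z_{\alpha,i}$ in the program (\ref{lw}) can be thought of as recording how the ``budget'' $f_{d,i}x_i^d$ is distributed among those bad monomials that involve $x_i$.

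Fix a feasible $\mathbf{z}=(z_{\alpha,i})$. For $\alpha\in\Delta(f)^{=d}$, the exponents $\alpha_i/d$ sum to $1$, so weighted AM--GM applied to $\tfrac{d}{\alpha_i}z_{\alpha,i}x_i^d$ (using that $d$ is even, so $x_i^d=|x_i|^d\ge 0$) yields
\[
\sum_i z_{\alpha,i}x_i^d \;\ge\; \Bigl(\prod_i(dz_{\alpha,i}/\alpha_i)^{\alpha_i/d}\Bigr)|\mathbf{x}^\alpha|,
\]
and the equality constraint $(\mathbf{z}_\alpha/\alpha)^\alpha=(f_\alpha/d)^d$ is exactly what is needed to make the right-hand coefficient equal $|f_\alpha|$, so $\sum_i z_{\alpha,i}x_i^d+f_\alpha\mathbf{x}^\alpha\ge 0$. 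For $\alpha\in\Delta(f)^{<d}$ I introduce a non-negative dummy constant $v_\alpha$ with weight $(d-|\alpha|)/d$, apply the same weighted AM--GM, and choose $v_\alpha$ so that the geometric mean on the right equals $|f_\alpha\mathbf{x}^\alpha|$. A short calculation shows that the resulting cost $\tfrac{d-|\alpha|}{d}v_\alpha$ simplifies to
\[
(d-|\alpha|)\bigl[(f_\alpha/d)^d(\alpha/\mathbf{z}_\alpha)^\alpha\bigr]^{1/(d-|\alpha|)},
\]
which is precisely the $\alpha$-summand of the objective in (\ref{lw}). Writing $c_\alpha$ for this cost (and setting $c_\alpha=0$ when $|\alpha|=d$), I have $\sum_i z_{\alpha,i}x_i^d+c_\alpha+f_\alpha\mathbf{x}^\alpha\ge 0$ for every $\alpha\in\Delta(f)$.

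Summing these inequalities over $\alpha\in\Delta(f)$, using the constraint $\sum_\alpha z_{\alpha,i}\le f_{d,i}$ to pad $\sum_\alpha z_{\alpha,i}x_i^d$ up to $f_{d,i}x_i^d$ (the slack $(f_{d,i}-\sum_\alpha z_{\alpha,i})x_i^d$ is a non-negative multiple of the monomial square $(x_i^{d/2})^2$), and adding the already-squared monomials $f_\alpha\mathbf{x}^\alpha$ for $\alpha\in\Omega(f)\setminus\Delta(f)$, I arrive at $f(\mathbf{x})-f(0)+\sum_\alpha c_\alpha\ge 0$. Taking the infimum of the cost over feasible $\mathbf{z}$ gives $\rho(f)$, so $f\ge f(0)-\rho(f)=f_{\operatorname{gp}}$.

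To upgrade the pointwise inequality ``$\ge 0$'' to an honest sum-of-binomial-squares certificate, I would realize each expression $\sum_i z_{\alpha,i}x_i^d+c_\alpha+f_\alpha\mathbf{x}^\alpha$ as such a sum by iterating the two-term identity $a+b-2\sqrt{ab}=(\sqrt a-\sqrt b)^2$; the slack terms and the monomials indexed by $\Omega(f)\setminus\Delta(f)$ are already (monomial, hence binomial) squares. The main obstacle is this final step: turning the weighted AM--GM pointwise bound into a genuine binomial-square decomposition requires a Reznick-style dyadic (``mediated set'') subdivision of the exponent simplex spanned by $\{\epsilon_1,\dots,\epsilon_n,\alpha\}$, and some care is needed when $\alpha$ has odd components so that the signs of the $\mathbf{x}^\alpha$ terms are tracked correctly.
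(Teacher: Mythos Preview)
The paper does not prove this statement: Theorem~\ref{thm1} is quoted from \cite[Theorem~3.1]{gha-mar2} and used as a black box throughout, so there is no in-paper proof to compare against.

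That said, your sketch is correct and is essentially the argument carried out in \cite{gha-mar2} (building on Hurwitz--Reznick and Fidalgo--Kovacec \cite{fidalgo}). The weighted AM--GM bookkeeping for the two cases $|\alpha|=d$ and $|\alpha|<d$ is exactly right, and your verification that the dummy cost $\tfrac{d-|\alpha|}{d}v_\alpha$ collapses to the objective summand in (\ref{lw}) is the key computation. You have also correctly located the only nontrivial part of the theorem: the claim is not merely $f\ge f(0)-\rho(f)$ pointwise but that the difference is a sum of \emph{binomial} squares, and this requires an algebraic identity, not just an inequality. One small caution on your last paragraph: naively iterating $a+b-2\sqrt{ab}=(\sqrt a-\sqrt b)^2$ only handles weights that are dyadic rationals; for arbitrary integer weights $\alpha_i$ one genuinely needs Reznick's mediated-set construction (equivalently, one checks that $\alpha$ lies in the interior of the simplex with vertices $\epsilon_1,\dots,\epsilon_n,0$, so the Hurwitz--Reznick criterion applies and each block $\sum_i z_{\alpha,i}x_i^d+c_\alpha+f_\alpha\mathbf{x}^\alpha$ is a sum of binomial squares). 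You do mention mediated sets, so this is a matter of phrasing rather than a gap.
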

Here, $\left(\frac{\alpha}{\mathbf{z}_\alpha}\right)^\alpha := \prod_{i=1}^n \frac{\alpha_i^{\alpha_i}}{(z_{\alpha,i})^{\alpha_i}}$
and $\left(\frac{\mathbf{z}_\alpha}{\alpha}\right)^\alpha := \prod_{i=1}^n \frac{(z_{\alpha,i})^{\alpha_i}}{\alpha_i^{\alpha_i}}$,
the convention being that $0^0 = 1$.

If the feasible set of the program (\ref{lw}) is empty, then $\rho(f) = \infty$ and $f_{\operatorname{gp}} = -\infty$.
A sufficient (but not necessary) condition for the feasible set of (\ref{lw}) to be nonempty is that $\Delta(f)^{=d} =\emptyset$
and $f_{d,i}>0$, $i=1,\dots,n$. If $\deg(f)<d$ then either $\Delta(f)=\emptyset$ and $f_{\operatorname{gp}}=f(0)$ or
$\Delta(f)\ne \emptyset$ and $f_{\operatorname{gp}}=-\infty$.

If $f_{d,i}>0$, $i=1,\dots,n$ then (\ref{lw}) is a geometric program. Somewhat more generally, if $\forall$ $i=1,\dots,n$
either ($f_{d,i}>0$) or ($f_{d,i}=0$ and  $\alpha_i=0$ $\forall$ $\alpha \in \Delta(f)$), then (\ref{lw}) is a geometric program.
In the remaining cases (\ref{lw}) is not a geometric program and the feasible set of (\ref{lw}) is empty.
\section{lower bound on a product of hyperellipsiods}

Fix an even integer $d \ge 2$ and suppose $f, g_1,\dots,g_m$ have degree at most $d$. Let $G(\lambda) := f-\sum_{j=1}^m \lambda_jg_j$ where
$\lambda = (\lambda_1,\dots,\lambda_m) \in [0,\infty)^m$. By Theorem \ref{thm1}, $G(\lambda)_{\operatorname{gp}}$ is a lower bound for $G(\lambda)$ on $\mathbb{R}^n$. It follows that $G(\lambda)_{\operatorname{gp}}$ is a lower bound for $f$ on $K_{\mathbf{g}}$ and consequently, that
\[
	s(f,\mathbf{g}):=\sup\{ G(\lambda)_{\operatorname{gp}} : \lambda \in [0,\infty)^m\}
\]
is a lower bound for $f$ on $K_{\mathbf{g}}$.\footnote{In fact, $s(f,\mathbf{g}) \le f_{\operatorname{sos},\mathbf{g}}^{(d)}$.
By Theorem \ref{thm1}, $G(\lambda)-G(\lambda)_{\operatorname{gp}}$ is a sum of binomial squares (obviously of degree at most $d$)
for each $\lambda \in [0,\infty)^m$. This implies that $G(\lambda)_{\operatorname{gp}} \le f_{\operatorname{sos},\mathbf{g}}^{(d)}$
for each $\lambda \in [0,\infty)^m$.}  By Theorem \ref{thm1}, for  each
$\lambda \in [0,\infty)^m$, $G(\lambda)_{\operatorname{gp}}$ is computable by geometric programming.
Unfortunately, this does not imply that the supremum is so computable, but there are certain special cases where it is \cite{gha-mar3}. In particular, we have the following: 

\begin{thm} \label{the algorithm} Assume $m\ge 1$ and let $g_j := 1-\sum_{i\in I_j} (\frac{x_i}{N_i})^d$, $j=1,\dots,m$, where $d$ is a fixed even integer $d \ge \max\{ 2, \deg(f)\}$, $N_i$ is a fixed positive real number, $i=1,\dots, n$ and  $I_1,\dots, I_m$ is a fixed partition of $\{ 1,\dots,n\}$. Let $c_j := \max\{ f_{d,i}N_i^d, 0 : i \in I_j\}$,  $j=1,\dots, m.$ Then $s(f,{\bf g}) = f(0)+\sum_{j=1}^m c_j - \rho$, where $\rho$ denotes the optimum value of the geometric program:
\begin{equation}
\label{lw3}
\left\{\begin{array}{lll}
	\textrm{Minimize} & \multicolumn{2}{c}{\sum\limits_{j=1}^m \mu_j+ \sum\limits_{\alpha\in\Delta(f)^{<d}}(d-\vert\alpha\vert)\left[
	 \left(\frac{f_\alpha}{d}\right)^{d}\,\left(\frac{\alpha}{\mathbf{z}_\alpha}\right)^\alpha
	\right]^{1/(d-\vert\alpha\vert)}}\\
	\textrm{\mbox{s.t.}} & \sum\limits_{\alpha\in\Delta(f)}z_{\alpha,i}+(\frac{c_j}{N_i^d}-f_{i,d}) \le \frac{\mu_j}{N_i^d}, & i \in I_j, \ j=1,\ldots,m\\
	& \left(\frac{\mathbf{z}_\alpha}{\alpha}\right)^\alpha = \left(\frac{f_\alpha}{d}\right)^{d}, & \alpha\in\Delta(f)^{=d}\\
	& c_j \le \mu_j, & j=1,\dots,m
\end{array}\right.
\end{equation}
Here, for every $\alpha\in\Delta(f)$, the unknowns $\mathbf{z}_\alpha = (z_{\alpha,i})\in[0,\infty)^n$ satisfy $z_{\alpha,i}= 0$ if and only if $\alpha_i=0$, and the unknowns $\mu = (\mu_1,\dots,\mu_m)$ satisfy $\mu_j> 0$.
\end{thm}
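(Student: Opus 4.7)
The plan is to apply Theorem \ref{thm1} to the ``Lagrangian'' polynomial $G(\lambda) := f - \sum_{j=1}^m \lambda_j g_j$ for every $\lambda \in [0,\infty)^m$, then fuse the outer supremum defining $s(f,\mathbf{g})$ with the inner minimization from Theorem \ref{thm1} into a single program, and finally reparametrize to reach (\ref{lw3}). First I would identify the data of $G(\lambda)$: since each $g_j = 1 - \sum_{i \in I_j}(x_i/N_i)^d$ is supported on $\{0\} \cup \{\epsilon_i : i \in I_j\}$, subtracting $\sum_j \lambda_j g_j$ from $f$ alters only the constant term and the pure-power coefficients. Consequently $\Delta(G(\lambda)) = \Delta(f)$, the decomposition into degree-$<d$ and $=d$ parts is preserved, $G(\lambda)(0) = f(0) - \sum_j \lambda_j$, and, using that $I_1, \dots, I_m$ partitions $\{1,\dots,n\}$, $G(\lambda)_{d,i} = f_{d,i} + \lambda_j/N_i^d$ for the unique $j$ with $i \in I_j$.

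Next, Theorem \ref{thm1} gives $G(\lambda)_{\operatorname{gp}} = G(\lambda)(0) - \rho(G(\lambda))$, where $\rho(G(\lambda))$ is the infimum of the objective in (\ref{lw}) over the feasible set $\mathcal{F}_\lambda$ cut out by the $\Delta(f)^{=d}$ equalities together with the inequalities $\sum_{\alpha} z_{\alpha,i} \le f_{d,i} + \lambda_j/N_i^d$ for $i \in I_j$. Since $\sup_{\lambda}\sup_{z} = \sup_{(\lambda,z)}$, the two optimizations collapse into a joint infimum:
\[
s(f,\mathbf{g}) = f(0) - \inf_{\lambda \ge 0,\, z \in \mathcal{F}_\lambda}\Bigl[\sum_{j=1}^m \lambda_j + \sum_{\alpha \in \Delta(f)^{<d}} (d - |\alpha|) \Bigl[\Bigl(\frac{f_\alpha}{d}\Bigr)^d \Bigl(\frac{\alpha}{\mathbf{z}_\alpha}\Bigr)^\alpha\Bigr]^{1/(d-|\alpha|)}\Bigr].
\]

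Setting $\mu_j := \lambda_j + c_j$ then makes $\lambda_j \ge 0$ equivalent to $\mu_j \ge c_j$, rearranges each linear constraint for $i \in I_j$ into $\sum_\alpha z_{\alpha,i} + (c_j/N_i^d - f_{d,i}) \le \mu_j/N_i^d$, and shifts the objective by $-\sum_j c_j$. This is exactly the program (\ref{lw3}), whose optimum is $\rho$; pushing the shift back through yields $s(f,\mathbf{g}) = f(0) + \sum_j c_j - \rho$.

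The main obstacle I anticipate is explaining why the specific choice $c_j = \max\{f_{d,i} N_i^d,\, 0 : i \in I_j\}$ is forced, and hence why (\ref{lw3}) is a bona fide geometric program: this choice is precisely what guarantees $c_j/N_i^d - f_{d,i} \ge 0$ for every $i \in I_j$, so that the shifted left-hand side $\sum_\alpha z_{\alpha,i} + (c_j/N_i^d - f_{d,i})$ is an honest posynomial in the $(z,\mu)$ variables (a strictly smaller shift would fail this). Some routine care is also needed for degenerate cases, where $\mathcal{F}_\lambda$ is empty for some $\lambda$ or where the infimum in $\mu_j$ is only approached at $\mu_j = c_j$, but interpreting all suprema and infima in the extended reals handles these without incident.
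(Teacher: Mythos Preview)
Your proposal is correct and follows essentially the same route as the paper: identify $\Delta(G(\lambda))=\Delta(f)$, $G(\lambda)(0)=f(0)-\sum_j\lambda_j$, $G(\lambda)_{d,i}=f_{d,i}+\lambda_j/N_i^d$, merge the outer $\sup_\lambda$ with the inner program from Theorem~\ref{thm1}, and then substitute $\mu_j=\lambda_j+c_j$. The only cosmetic difference is that the paper makes the boundary issue ($\mu_j>0$ versus $\mu_j\ge c_j$ when $c_j=0$) explicit via an $\epsilon/m$ perturbation of $\mu_j$, whereas you absorb it into the remark about degenerate cases; since increasing $\mu_j$ only relaxes the constraints and raises the objective by the same amount, this is indeed routine.
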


\begin{proof} Note that $G(\lambda)_{\alpha} = f_{\alpha}$ for $\alpha \in \Delta(f)$,
$G(\lambda)_{d,i} = f_{d,i}+\frac{\lambda_j}{N_i^d}$, for $i \in I_j$, and $G(\lambda)(0) = f(0)-\sum_{j=1}^m \lambda_j$.

Suppose first that $({\bf z},\mu)$ is a feasible point for the program (\ref{lw3}). Let $\lambda = (\lambda_1,\dots,\lambda_m)$ where $\lambda_j := \mu_j-c_j$, $j=1,\dots,m$. Then $\lambda \in [0,\infty)^m$ and ${\bf z}$ is a feasible point for the program:
\begin{equation}\label{lw'}
\left\{
\begin{array}{lll}
	\textrm{Minimize} & \multicolumn{2}{c}{\sum\limits_{\alpha\in\Delta(f)^{<d}}(d-\vert\alpha\vert)
	 \left[\left(\frac{f_\alpha}{d}\right)^{d}\,\left(\frac{\alpha}{\mathbf{z}_\alpha}\right)^\alpha
	\right]^{1/(d-\vert\alpha\vert)}}\\
	\textrm{\mbox{s.t.}} & \sum\limits_{\alpha\in\Delta(f)}z_{\alpha,i} \le f_{i,d}+\frac{\lambda_j}{N_i^d}, & i  \in I_j, \ j=1,\ldots,m \\
	& \left(\frac{\mathbf{z}_\alpha}{\alpha}\right)^\alpha =\left(\frac{f_\alpha}{d}\right)^{d}, & \alpha\in\Delta(f)^{=d}
\end{array}\right.
\end{equation}
It follows that
\begin{align*}
& s(f,{\bf g}) \ge G(\lambda)_{\operatorname{gp}}  = f(0)- \sum_{j=1}^m \lambda_j - \rho(G(\lambda)) \\ & \ge f(0)- \sum_{j=1}^m \lambda_j - \sum\limits_{\alpha\in\Delta(f)^{<d}}(d-\vert\alpha\vert)\left[
	 \left(\frac{f_\alpha}{d}\right)^{d}\,\left(\frac{\alpha}{\mathbf{z}_\alpha}\right)^\alpha
	\right]^{1/(d-\vert\alpha\vert)} \\ & = f(0)+ \sum_{j=1}^m c_j -\sum_{j=1}^m \mu_j - \sum\limits_{\alpha\in\Delta(f)^{<d}}(d-\vert\alpha\vert)\left[
	 \left(\frac{f_\alpha}{d}\right)^{d}\,\left(\frac{\alpha}{\mathbf{z}_\alpha}\right)^\alpha
	\right]^{1/(d-\vert\alpha\vert)},
\end{align*}
so $s(f,{\bf g}) \ge f(0)+\sum_{j=1}^m c_j -\rho$.

Now let $\lambda \in [0,\infty)^m$, let $\mathbf{z}$ be a feasible point for the program (\ref{lw'}), and let $\epsilon>0$ be given.
Let $\mu = (\mu_1,\dots,\mu_m)$ where $\mu_j:= \lambda_j+\frac{\epsilon}{m}+c_j$, $j=1,\dots,m$.
Then $({\bf z}, \mu)$ is a feasible point for the program (\ref{lw3}), so
\begin{align*}
	& f(0)+\sum_{j=1}^m c_j -\rho \\ & \ge f(0)+\sum_{j=1}^m c_j- \sum_{j=1}^m \mu_j  -  \sum\limits_{\alpha\in\Delta(f)^{<d}}(d-\vert\alpha\vert)\left[
	 \left(\frac{f_\alpha}{d}\right)^{d}\,\left(\frac{\alpha}{\mathbf{z}_\alpha}\right)^\alpha
	\right]^{1/(d-\vert\alpha\vert)} \\
	 & \ge f(0)-  \sum_{j=1}^m \lambda_j-\sum\limits_{\alpha\in\Delta(f)^{<d}}(d-\vert\alpha\vert)\left[
	 \left(\frac{f_\alpha}{d}\right)^{d}\,\left(\frac{\alpha}{\mathbf{z}_\alpha}\right)^\alpha
	\right]^{1/(d-\vert\alpha\vert)} - \epsilon.
\end{align*}
It follows that $f(0)+\sum_{j=1}^m c_j -\rho \ge  G(\lambda)_{\operatorname{gp}}-\epsilon$. Since this holds for any $\lambda \in [0,\infty)^m$ and for any $\epsilon>0$, it follows that $f(0)+\sum_{j=1}^m c_j -\rho \ge s(f, {\bf g})$.
\end{proof}

Table \ref{RunTimeTable} records average running time for computation of $s(f,\mathbf{g})$ for large examples (where computation of $f_{\operatorname{sos},\mathbf{g}}^{(d)}$ is not possible). Here $N_i :=1$, $i = 1,\dots,n$, so $g_j = 1 - \sum_{i\in I_j} x_i^d$, $j=1,\dots,m$. The average is taken over 10 randomly chosen partitions $\{I_1,\dots,I_m\}$ and polynomials $f$, each $f$ having $t$ terms and $\deg(f)\le d$ with coefficients chosen from $[-10,10]$. See also \cite[Table 2]{gha-lass-mar} and \cite[Table 3]{gha-mar2}.

\begin{table}[ht]{\footnotesize
\caption{Average runtime for computation of $s(f,\mathbf{g})$ (seconds) for various $n, d$ and $t$.}\label{RunTimeTable}
\centering
\begin{tabular}{|c|c|cccc|}
\hline
$n$ & $d\backslash t$ & 50 & 100 & 150 & 200 \\
\hline
\multirow{3}{*}{10} & 20 & 3.330 & 23.761 & 79.369 & 170.521 \\
				    & 40 & 5.730 & 43.594 & 159.282 & 421.497 \\
				    & 60 & 6.524 & 68.625 & 191.126 & 531.442 \\
\hline
\multirow{3}{*}{20} & 20 & 8.364 & 63.198 & 193.431 & 562.243 \\
				    & 40 & 16.353 & 149.137 & 473.805 & 1102.579 \\
				    & 60 & 31.774 & 304.065 & 782.967 & 1184.263 \\
\hline
\multirow{3}{*}{30} & 20 & 12.746 & 107.285 & 353.803 & 776.831 \\
				    & 40 & 46.592 & 310.228 & 753.356 & 1452.159 \\
				    & 60 & 58.838 & 539.738 & 1271.102 & 1134.887 \\
\hline
\multirow{3}{*}{40} & 20 & 15.995 & 148.827 & 423.117 & 989.318 \\
				    & 40 & 60.861 & 414.188 & 1493.461 & 1423.965 \\
				    & 60 & 95.384 & 784.039 & 1305.201 & 1093.932 \\
\hline

\end{tabular}}
\end{table}

Table \ref{runtimecmp} records average running time for computation of $s(f,\mathbf{g})$ (the top number) and $f^{(d)}_{\operatorname{sos},\mathbf{g}}$ (the bottom number) for small examples. Here $g_j = 1 - \sum_{i\in I_j} x_i^d$, $j=1,\dots,m$. The average is taken over 10 randomly chosen partitions $\{I_1,\dots,I_m\}$ and polynomials $f$, each $f$ having $t$ terms and $\deg(f)\le d$ with coefficients chosen from $[-10,10]$.\footnote{\textbf{Hardware and Software
specifications.} Processor: Intel\textregistered~ Core\texttrademark2 Duo CPU P8400 @ 2.26GHz, Memory: 3 GB, OS: \textsc{Ubuntu}
14.04-32 bit, \textsc{Sage}-6.0} See also \cite[Table 1]{gha-lass-mar} and \cite[Table 2]{gha-mar2}.

\begin{table}[ht]{\footnotesize
\caption{Average runtime for computation of $s(f,\mathbf{g})$ and $f^{(d)}_{\operatorname{sos},\mathbf{g}}$
for various $n, d$ and $t$.}\label{runtimecmp}

\centering
\begin{tabular}{|c|c|cccccccccc|}
\hline
$n$ & $d\backslash t$ & 10 & 30 & 50 & 100 & 150 & 200 & 250 & 300 & 350 & 400 \\
\hline
\multirow{6}{*}{3} & \multirow{2}{*}{4} & 0.034 & 0.092 &  &  &  &  &  &  &  & \\
					&				   & 0.026 & 0.026 &  &  &  &  &  &  &  & \\
				  & \multirow{2}{*}{6} & 0.041 & 0.105 &  &  &  &  &  &  &  & \\
				    	&				   & 0.108 & 0.103 &  &  &  &  &  &  &  & \\
				  & \multirow{2}{*}{8} & 0.045 & 0.164 &  &  &  &  &  &  &  & \\
				  	&				   & 0.522 & 0.510 &  &  &  &  &  &  &  & \\
\hline
\multirow{6}{*}{4} & \multirow{2}{*}{4} & 0.042 & 0.116 &  &  &  &  &  &  &  & \\
					&				   & 0.045 & 0.054 &  &  &  &  &  &  &  & \\
				  & \multirow{2}{*}{6} & 0.043 & 0.133 & 0.285 &  &  &  &  &  &  & \\
				    	&				   & 0.662 & 0.632 & 0.624 &  &  &  &  &  &  & \\
				  & \multirow{2}{*}{8} & 0.053 & 0.191 & 0.446 & 2.479 & 7.089 &  &  &  &  & \\
				  	&				   & 12.045 & 13.019 & 11.956 & 12.091 & 12.307 &  &  &  &  & \\
\hline
\multirow{6}{*}{5} & \multirow{2}{*}{4} & 0.039 & 0.124 & 0.295 &  &  &  &  &  &  & \\
					&				   & 0.181 & 0.154 & 0.127 &  &  &  &  &  &  & \\
				  & \multirow{2}{*}{6} & 0.052 & 0.156 & 0.396 & 1.677 & 5.864 & 13.814 &  &  &  & \\
				    	&				   & 6.429 & 6.530 & 6.232 & 6.425 & 6.237 & 6.469 &  &  &  & \\
				  & \multirow{2}{*}{8} & 0.056 & 0.219 & 0.528 & 3.046 & 7.663 & 23.767 & 44.699 & 88.104 & 123.986 & 179.126\\
				  	&				   & 340.321 & 243.860 & 225.746 & 205.621 & 222.619 & 220.887 & 224.018 & 218.994 & 219.085 & 213.348\\
\hline
\multirow{6}{*}{6} & \multirow{2}{*}{4} & 0.043 & 0.140 & 0.340 & 1.545 &  &  &  &  &  & \\
					&				   & 0.453 & 0.422 & 0.422 & 0.448 &  &  &  &  &  & \\
				  & \multirow{2}{*}{6} & 0.053 & 0.194 & 0.429 & 2.079 & 7.138 & 16.212 & 34.464 & 71.465 & 102.342 & 166.345 \\
				    	&				   & 48.239 & 47.752 & 47.776 & 51.268 & 51.008 & 48.012 & 49.908 & 53.135 & 51.542 & 52.874 \\
				  & \multirow{2}{*}{8} & 0.066 & 0.251 & 0.681 & 3.389 & 11.985 & 36.495 & 74.088 & 148.113 & 174.163 & 269.544 \\
				    	&				   & -- & -- & -- & -- & -- & -- & -- & -- & -- & -- \\
\hline
\end{tabular}}
\end{table}

Table \ref{PartitionTable} computes average values for the relative error
\[
	R=  \frac{-s(-f,\mathbf{g})-s(f,\mathbf{g})}{-(-f)_{\operatorname{sos},\mathbf{g}}^{(d)}-f_{\operatorname{sos},\mathbf{g}}^{(d)}}
\]
for small examples. Here $g_j = 1 - \sum_{i\in I_j} x_i^d$, $j=1,\dots,m$.
The average is taken over 20 randomly chosen partitions $\{I_1,\dots,I_m\}$ and  polynomials $f$, each $f$ having $t$ terms and $\deg(f)\le d$ with coefficients chosen from $[-10,10]$. Table \ref{PartitionTable} would seem to confirm that for fixed $n,d$
the quality of the bound $s(f,\mathbf{g})$ is best when $t$ is small, and for fixed $d,t$ the quality of the bound $s(f,\mathbf{g})$
is best when $n$ is large.  Comparison of Table \ref{PartitionTable} with \cite[Table 3]{gha-lass-mar} would seem to indicate
that the quality of the bound $s(f,\mathbf{g})$ is best when $m=1$.
\begin{table}[ht]{\footnotesize
\caption{Average values of $R$ for various $n, d$ and $t$.}\label{PartitionTable}
\centering
\begin{tabular}{|c|c|ccccccccc|}
\hline
$n$ & $d\backslash t$ & 5 & 10 & 50 & 100 & 150 & 200 & 250 & 300 & 400 \\
\hline
\multirow{3}{*}{3} & 4 & 1.3688 & 1.6630 & & & & & & & \\
				   & 6 & 1.5883 & 2.0500 & 4.3726 & & & & & & \\
				   & 8 & 2.0848 & 2.7636 & 5.6391 & 5.8140 & 6.9135 & & & & \\
\hline
\multirow{3}{*}{4} & 4 & 1.2183 & 1.3420 & 3.3995 & & & & & & \\
				   & 6 & 1.3816 & 2.9584 & 3.1116 & 4.6891 & 5.8067 & 6.5150 & & & \\
				   & 8 & 1.7630 & 2.2038 & 3.2685 & 4.4219 & 5.7929 & 7.0841 & 7.9489 & 8.7924 & 9.6068 \\
\hline
\multirow{3}{*}{5} & 4 & 1.2566 & 1.6701 & 3.1867 & 4.6035 & & & & & \\
				   & 6 & 2.3807 & 2.9424 & 3.3077 & 4.7939 & 5.6523 & 7.1996 & 8.6194 & 9.3317 & 10.134\\
				   & 8 & 1.5557 & 1.9754 & 2.5204 & 3.9815 & 4.5404 & 5.1756 & 6.2214 & 6.6919 & 7.8921 \\
\hline
\multirow{3}{*}{6} & 4 & 1.2069 & 1.3876 & 3.0639 & 4.5326 & 4.9645 & 6.1414 & & & \\
				   & 6 & 1.2602 & 1.4854 & 2.8236 & 4.0256 & 4.5797 & 6.3479 & 6.9487 & 7.4866 & 8.8435 \\
				   & 8 & 1.0478 & 1.1616 & 2.4884 & 3.3896 & 4.0870 & 5.0809 & 5.6932 & 6.1994 & 10.567 \\
\hline
\multirow{2}{*}{7} & 4 & 1.1943 & 1.3360 & 2.8592 & 4.5334 & 5.5064 & 5.9837 & 7.4782 & 7.3568 & \\
				   & 6 & 1.2604 & 1.4529 & 2.6962 & 3.8035 & 4.5351 & 6.0305 & 6.1478 & 6.6746 & 9.5049 \\
\hline
\multirow{2}{*}{8} & 4 & 1.1699 & 1.4274 & 2.5942 & 4.0914 & 5.4111 & 6.2111 & 7.4479 & 8.3168 & 9.1369 \\
				   & 6 & 1.0454 & 1.1270 & 2.1316 & 3.2807 & 3.5468 & 4.8955 & 5.1211 & 5.4214 & 7.1872 \\
\hline
\multirow{1}{*}{9} & 4 & 1.2158 & 1.3214 & 2.9305 & 4.0624 & 5.9063 & 6.5985 & 7.9552 & 8.4233 & 11.810 \\
\hline
\multirow{1}{*}{10} & 4 & 1.1476 & 1.3441 & 2.2393 & 3.7136 & 5.7739 & 6.1791 & 6.3211 & 8.7773 & 10.428 \\
\hline
\end{tabular}}
\end{table}
\section{The trivial bound on $\prod_{i=1}^n[-N_i,N_i]$}
Fix $f\in \mathbb{R}[\mathbf{x}]$ and $\mathbf{N} = (N_1,\dots,N_n)$, $N_i>0$, $i=1,\dots,n$. If $\mathbf{x} \in \prod_{i=1}^n [-N_i,N_i]$, then
\[
	f(\mathbf{x}) = \sum f_{\alpha}\mathbf{x}^{\alpha} \ge f(\mathbf{0})-\sum_{\alpha \in \Delta'(f)} |f_{\alpha}|\cdot |\mathbf{x}^{\alpha}| \ge f(\mathbf{0})-\sum_{\alpha\in \Delta'(f)} |f_{\alpha}|\cdot\mathbf{N}^{\alpha},
\]
where $\Delta'(f):= \{ \alpha \in \mathbb{N}^n : |\alpha|>0 \text{ and } f_{\alpha}\mathbf{x}^{\alpha} \text{ is not a square in } \mathbb{R}[\mathbf{x}]\}$
and $\mathbf{N}^{\alpha} := \prod_{i=1}^n N_i^{\alpha_i}$. Set
\begin{equation}
f_{\operatorname{tr},\mathbf{N}} := f(\mathbf{0}) - \sum_{\alpha\in \Delta'(f)} |f_{\alpha}|\cdot \mathbf{N}^{\alpha}.
\end{equation}
Thus $f_{\operatorname{tr},\mathbf{N}}$ is a lower bound for $f$ on the hypercube $\prod_{i=1}^n [-N_i,N_i]$.
We refer to $f_{\operatorname{tr},\mathbf{N}}$ as the \textit{trivial bound} for $f$ on $\prod_{i=1}^n [-N_i,N_i]$.
If $N_i = \root d \of{M}$, $i=1,\dots,n$, this coincides with the trivial bound defined in \cite[Section 3]{gha-lass-mar}.

Suppose now that $d$ is an even integer, $d \ge \max\{ 2, \deg f\}$. Define $\mathbf{g} = (1-(\frac{x_1}{N_1})^d,\dots,1-(\frac{x_n}{N_n})^d)$.
We want to compare $s(f,\mathbf{g})$ with $f_{\operatorname{tr},\mathbf{N}}$.
\begin{thm} \label{hypercube case} Set-up as above. Then  \

(1) $s(f,\mathbf{g}) \ge f_{\operatorname{tr},\mathbf{N}}$.

(2) If $f_{d,i}\le 0$ for $i=1,\dots,n$ then $s(f,\mathbf{g}) = f_{\operatorname{tr},\mathbf{N}}$.
In particular, if $\deg f <d$ then  $s(f,\mathbf{g}) = f_{\operatorname{tr},\mathbf{N}}$.
\end{thm}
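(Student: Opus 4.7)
The plan is to apply Theorem \ref{the algorithm} with $m=n$ and $I_j=\{j\}$, reducing $s(f,\mathbf{g})$ to $f(0)+\sum_j c_j - \rho$, where $c_j = \max\{f_{d,j}N_j^d,0\}$ and $\rho$ is the optimum of (\ref{lw3}). In this specialization the $j$-th inequality involves only $i=j$ and reads $\mu_j \ge N_j^d\sum_{\alpha\in\Delta(f)} z_{\alpha,j} + N_j^d\max\{-f_{d,j},0\}$ (alongside $\mu_j\ge c_j$). The proof then sandwiches $\rho$ between the values $\sum_j c_j + \sum_{\alpha\in\Delta'(f)}|f_\alpha|\mathbf{N}^\alpha$ (upper) and $\sum_{\alpha\in\Delta'(f)}|f_\alpha|\mathbf{N}^\alpha$ (lower, under the hypothesis of part (2)), which immediately yields both parts.

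For part (1) I exhibit the explicit feasible point
\[
z_{\alpha,i} := \frac{\alpha_i\,|f_\alpha|\,\mathbf{N}^\alpha}{d\,N_i^d}, \qquad \mu_i := N_i^d|f_{d,i}| + \sum_{\alpha\in\Delta(f)} N_i^d z_{\alpha,i}.
\]
Since $d$ is even, a direct computation gives $(\mathbf{z}_\alpha/\alpha)^\alpha = (|f_\alpha|\mathbf{N}^\alpha/d)^{|\alpha|}\mathbf{N}^{-d\alpha}$, which equals $(f_\alpha/d)^d$ when $|\alpha|=d$, verifying the equality constraints; for $\alpha\in\Delta(f)^{<d}$ the bracketed term in the objective collapses to $|f_\alpha|\mathbf{N}^\alpha/d$. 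Splitting $|f_{d,i}|N_i^d = c_i + N_i^d\max\{-f_{d,i},0\}$ and noting that $\{\epsilon_i : f_{d,i}<0\} = \Delta'(f)\setminus\Delta(f)$, the objective simplifies to $\sum_j c_j + \sum_{\alpha\in\Delta'(f)}|f_\alpha|\mathbf{N}^\alpha$. This gives $\rho \le \sum_j c_j + \sum_{\alpha\in\Delta'(f)}|f_\alpha|\mathbf{N}^\alpha$ and, since $\sum_j c_j \ge 0$, yields $s(f,\mathbf{g}) \ge f_{\operatorname{tr},\mathbf{N}}$.

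For part (2), $c_j = 0$ for all $j$, so only the reverse inequality is needed. For any feasible $(\mathbf{z},\mu)$ the constraints yield $\sum_j\mu_j \ge \sum_\alpha\sum_i N_i^d z_{\alpha,i} + \sum_{\alpha\in\Delta'(f)\setminus\Delta(f)}|f_\alpha|\mathbf{N}^\alpha$. Then weighted AM--GM, applied with weights $\alpha_i/d$ to the quantities $N_i^d z_{\alpha,i}/\alpha_i$ and, when $|\alpha|<d$, with the extra weight $(d-|\alpha|)/d$ on $T_\alpha := [(f_\alpha/d)^d(\alpha/\mathbf{z}_\alpha)^\alpha]^{1/(d-|\alpha|)}$, telescopes (via either the equality constraint $(\mathbf{z}_\alpha/\alpha)^\alpha = (f_\alpha/d)^d$ or the definition of $T_\alpha$) to give
\[
\sum_i N_i^d z_{\alpha,i} + (d-|\alpha|)\,T_\alpha \ge |f_\alpha|\mathbf{N}^\alpha
\]
for each $\alpha\in\Delta(f)$. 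Summing over $\alpha$ and combining with the bound on $\sum_j \mu_j$ delivers $\rho \ge \sum_{\alpha\in\Delta'(f)}|f_\alpha|\mathbf{N}^\alpha$, closing the argument.

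The main obstacle is arranging the weighted AM--GM so that the geometric-mean side collapses exactly to $|f_\alpha|\mathbf{N}^\alpha/d$; this forces the weights to be precisely $\alpha_i/d$ and $(d-|\alpha|)/d$ so that the $(\mathbf{z}_\alpha/\alpha)^\alpha$ factors arising from the $N_i^d z_{\alpha,i}/\alpha_i$ product and from $T_\alpha^{(d-|\alpha|)/d}$ cancel exactly, and it also uses $d$ even through $(f_\alpha/d)^d = (|f_\alpha|/d)^d$. Minor edge cases (enforcing $\mu_j>0$ strictly, respecting the sparsity pattern $z_{\alpha,i}=0\iff \alpha_i=0$ when $\Delta(f)=\emptyset$ or some $f_{d,i}=0$) are handled by a small perturbation.
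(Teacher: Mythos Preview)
Your proof is correct and follows essentially the same route as the paper's: the explicit feasible point you exhibit for part (1) is the same choice the paper makes (after its reduction to $N_i=1$ and the substitution $\mu_j=\lambda_j+c_j$ from Theorem \ref{the algorithm}), and your weighted AM--GM bound on each $\alpha$-block for part (2) is exactly the content of the paper's auxiliary lemma. The only cosmetic differences are that you work through the $\mu$-formulation of Theorem \ref{the algorithm} and carry the general $N_i$ throughout, whereas the paper normalizes to $N_i=1$ and argues directly from the definition $s(f,\mathbf{g})=\sup_\lambda G(\lambda)_{\operatorname{gp}}$; also, your remark ``since $\sum_j c_j\ge 0$'' is superfluous, as the $\sum_j c_j$ terms cancel exactly.
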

We remark that if $f_{i,d}>0$ for some $i \in \{ 1,\dots, n\}$ then the bound $s(f,\mathbf{g})$ can strictly greater than $f_{\operatorname{tr},\mathbf{N}}$.
E.g., if $n=1$, $f=x_1^2-x_1$, $d=2$, $N_1=1$,
then $f_{\operatorname{tr},\mathbf{N}} = -1$, $s(f,\mathbf{g})=f_{\operatorname{\operatorname{gp}}}=-\frac{1}{4}$.
\begin{proof}
By making the change of variables $y_i = N_i\cdot x_i$, $i=1,\dots,n$, we are reduced to the case where $N_1=\dots = N_n=1$.
By definition of $\mathbf{g}$,
\[
G(\lambda) = f-\sum_{i=1}^n \lambda_i(1-x_i^d) = f(\mathbf{0})-\sum_{i=1}^n \lambda_i+\sum_{\alpha\in \Omega(f)}f_{\alpha}\mathbf{x}^{\alpha}+\sum_{i=1}^n (f_{d,i}+\lambda_i)x_i^d,
\]
and $s(f,\mathbf{g})$ is obtained by maximizing the objective function
\begin{equation}\label{obj}
f(\mathbf{0})-\sum_{i=1}^n \lambda_i- \sum_{\alpha\in\Delta(f)^{<d}} (d-\vert\alpha\vert)\left[
\left(\frac{f_\alpha}{d}\right)^{d}\,\left(\frac{\alpha}{\mathbf{z}_\alpha}\right)^\alpha
\right]^{1/(d-\vert\alpha\vert)}
\end{equation}
subject to
\begin{equation}\label{lw5}
\left\{
\begin{array}{lr}
	\sum\limits_{\alpha\in\Delta(f)}z_{\alpha,i} \ \le \ f_{d,i}+\lambda_i, & i=1,\ldots,n\\
	 \left(\frac{f_\alpha}{d}\right)^{d}\left(\frac{\alpha}{\mathbf{z}_\alpha}\right)^\alpha  =  1, & \alpha\in\Delta(f)^{=d}
\end{array}\right.
\end{equation}
where, $\lambda_i\ge 0$, $z_{\alpha,i}\ge 0$ and $z_{\alpha,i}= 0$ if and only if $\alpha_i=0$.

(1) Define $z_{\alpha,i}:= \alpha_i\frac{|f_{\alpha}|}{d}$ and
$\lambda_i := \max\{ 0, \sum_{\alpha\in \Delta(f)} z_{\alpha,i} - f_{d,i}\}$. One checks that, for this choice of
$z_{\alpha,i}$ and $\lambda_i$, the constraints of (\ref{lw5}) are satisfied. Observe also that $f_{d,i}\ge 0$ $\Rightarrow$
$\lambda_i \le \sum_{\alpha\in \Delta(f)} z_{\alpha,i}$ and $f_{d,i}<0$ $\Rightarrow$
$\lambda_i = \sum_{\alpha\in \Delta(f)}z_{\alpha,i}-f_{d,i}$. Consequently,
\begin{align*}
\sum_{i=1}^n \lambda_i \le& \sum_{f_{d,i}\ge 0} (\sum_{\alpha\in \Delta(f)} z_{\alpha,i})+ \sum_{f_{d,i}<0}(\sum_{\alpha\in\Delta(f)}z_{\alpha,i}-f_{d,i})\\
=& \sum_{i=1}^n (\sum_{\alpha\in \Delta(f)} z_{\alpha,i})+\sum_{f_{d,i}<0} |f_{d,i}|\\
=& \sum_{\alpha\in \Delta(f)}(\sum_{i=1}^n z_{\alpha,i})+\sum_{f_{d,i}<0} |f_{d,i}| \\
=& \sum_{\alpha\in \Delta(f)}|\alpha|\cdot \frac{|f_{\alpha}|}{d} +\sum_{f_{d,i}<0} |f_{d,i}|,
\end{align*}
and
\begin{align*}
s(f,\mathbf{g}) \ge& f(\mathbf{0})-\sum_{i=1}^n \lambda_i- \sum_{\alpha\in\Delta(f)^{<d}} (d-\vert\alpha\vert)\left[
\left(\frac{f_\alpha}{d}\right)^{d}\,\left(\frac{\alpha}{\mathbf{z}_\alpha}\right)^\alpha
\right]^{1/(d-\vert\alpha\vert)}\\
\ge& f(\mathbf{0})- \sum_{\alpha\in \Delta(f)}|\alpha|\cdot \frac{|f_{\alpha}|}{d} -\sum_{f_{d,i}<0} |f_{d,i}|-
\sum_{\alpha\in \Delta(f)^{<d}}(d-|\alpha|)\cdot\frac{|f_{\alpha}|}{d}\\
=&f(\mathbf{0})-\sum_{\alpha\in \Delta(f)} |f_{\alpha}|  -\sum_{f_{d,i}<0} |f_{d,i}|\\
=& f_{\operatorname{tr},\mathbf{N}}.
\end{align*}

(2) Suppose $(\mathbf{z},\lambda)$ satisfies (\ref{lw5}). Since we are trying to maximize (\ref{obj}), we may as well assume each
$\lambda_i$ is chosen as small as possible, i.e., $\lambda_i = \max\{ 0, \sum_{\alpha\in \Delta(f)} z_{\alpha,i}-f_{d,i}\}$.
Since we are also assuming $f_{d,i}\le 0$, this means $\lambda_i = \sum_{\alpha\in \Delta(f)} z_{\alpha,i}-f_{d,i}$. Then
\begin{align*}
&f(\mathbf{0})-\sum_{i=1}^n \lambda_i- \sum_{\alpha\in\Delta(f)^{<d}} (d-\vert\alpha\vert)\left[
\left(\frac{f_\alpha}{d}\right)^{d}\,\left(\frac{\alpha}{\mathbf{z}_\alpha}\right)^\alpha
\right]^{1/(d-\vert\alpha\vert)}\\
&=f(\mathbf{0})-\sum_{i=1}^n(\sum_{\alpha\in \Delta(f)}z_{\alpha,i}-f_{d,i})- \sum_{\alpha\in\Delta(f)^{<d}} (d-\vert\alpha\vert)\left[
\left(\frac{f_\alpha}{d}\right)^{d}\,\left(\frac{\alpha}{\mathbf{z}_\alpha}\right)^\alpha
\right]^{1/(d-\vert\alpha\vert)}\\
&=f(\mathbf{0})-\sum_{\alpha\in \Delta(f)^{<d}}(\sum_{i=1}^n z_{\alpha,i}+(d-\vert\alpha\vert)\left[
\left(\frac{f_\alpha}{d}\right)^{d}\,\left(\frac{\alpha}{\mathbf{z}_\alpha}\right)^\alpha
\right]^{1/(d-\vert\alpha\vert)})\\
&-\sum_{\alpha\in \Delta(f)^{=d}}(\sum_{i=1}^n z_{\alpha,i})-\sum_{i=1}^n |f_{d,i}|.
\end{align*}
We \it claim \rm that, for each $\alpha \in \Delta(f)^{<d}$, the minimum value of
\[
	\sum_{i=1}^n z_{\alpha,i}+(d-\vert\alpha\vert)\left[
	 \left(\frac{f_\alpha}{d}\right)^{d}\,\left(\frac{\alpha}{\mathbf{z}_\alpha}\right)^\alpha
	\right]^{1/(d-\vert\alpha\vert)}
\]
subject to $z_{\alpha,i}\ge 0$ and $z_{\alpha,i}=0$ iff $\alpha_i=0$ is $|f_{\alpha}|$; and that, for each
$\alpha \in \Delta(f)^{=d}$, the minimal value of
\[
	\sum_{i=1}^n z_{\alpha,i}
\]
subject to $\left(\frac{f_\alpha}{d}\right)^{d}\left(\frac{\alpha}{\mathbf{z}_\alpha}\right)^\alpha \, = \, 1$,
$z_{\alpha,i}\ge 0$ and $z_{\alpha,i}=0$ iff $\alpha_i=0$ is also equal to $|f_{\alpha}|$.
It follows from the claim that the maximum value of (\ref{obj}) is equal to
\[
	s(f,\mathbf{g}) = f(\mathbf{0})-\sum_{\alpha\in \Delta(f)}|f_{\alpha}|-\sum_{i=1}^n |f_{d,i}| = f_{\operatorname{tr},\mathbf{N}}.
\]
In proving the claim, one can reduce first to the case where each $\alpha_i$ is strictly positive. The claim, in this case,
is a consequence of the following lemma.
\end{proof}
\begin{lemma}
Suppose $\alpha_i>0$, $i=1,\dots,n$.

(1) For $|\alpha|<d$, the minimum value of
\[
	\sum_{i=1}^n z_i + (d-|\alpha|)\left[(\frac{f_{\alpha}}{d})^d\prod_{i=1}^n \frac{\alpha_i^{\alpha_i}}{z_i^{\alpha_i}}\right]^{1/(d-|\alpha|)}
\]
on the set $(0,\infty)^n$ is equal to $|f_{\alpha}|$. The minimum occurs at $z_i = \alpha_i\cdot \frac{|f_{\alpha}|}{d}$, $i=1,\dots,n$.

(2) For $|\alpha|=d$, the minimum value of $\sum_{i=1}^n z_i$ subject to $z_i>0$ and
$(\frac{|f_{\alpha}|}{d})^d\cdot \prod_{i=1}^n\frac{\alpha_i^{\alpha_i}}{z_i^{\alpha_i}} = 1$ is equal to $|f_{\alpha}|$.
The minimum occurs at $z_i = \alpha_i\cdot \frac{|f_{\alpha}|}{d}$, $i=1,\dots,n$.
\end{lemma}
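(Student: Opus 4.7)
The whole lemma is a straightforward consequence of the weighted AM--GM inequality, so the plan is simply to identify the right weights and terms.

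For part (1), set $k:=d-|\alpha|>0$ and let $y:=\bigl[(f_\alpha/d)^d\prod_i \alpha_i^{\alpha_i}/z_i^{\alpha_i}\bigr]^{1/k}$, so that the quantity to be minimized is $S=\sum_i z_i + ky=\sum_i \alpha_i\cdot(z_i/\alpha_i)+k\cdot y$. The key observation is that the coefficients $\alpha_1,\dots,\alpha_n,k$ sum to $|\alpha|+k=d$. Thus, applying the weighted AM--GM inequality with these weights to the positive reals $z_1/\alpha_1,\dots,z_n/\alpha_n,y$, I get
\[
\frac{S}{d}\ge\Bigl(\prod_{i=1}^n(z_i/\alpha_i)^{\alpha_i}\cdot y^k\Bigr)^{1/d}.
\]
Substituting the definition of $y^k$ causes the factors $z_i^{\alpha_i}/\alpha_i^{\alpha_i}$ to cancel, leaving the right-hand side equal to $d\cdot|f_\alpha/d|=|f_\alpha|$ (using that $d$ is even, so $(f_\alpha/d)^d=(|f_\alpha|/d)^d$). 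This proves $S\ge|f_\alpha|$.

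For the equality case, AM--GM equality requires $z_i/\alpha_i=y$ for every $i$, i.e.\ $z_i=\alpha_i y$. Substituting back into the definition of $y$ gives $y^d=(f_\alpha/d)^d$, hence $y=|f_\alpha|/d$ and $z_i=\alpha_i\cdot|f_\alpha|/d$. A direct check confirms that $S$ takes value $|f_\alpha|$ at this point, so the bound is attained.

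Part (2) is the same argument with one fewer term. Since $|\alpha|=d$, the weights $\alpha_1,\dots,\alpha_n$ already sum to $d$, so weighted AM--GM yields
\[
\sum_{i=1}^n z_i = \sum_{i=1}^n \alpha_i\cdot\frac{z_i}{\alpha_i}\ge d\Bigl(\prod_{i=1}^n (z_i/\alpha_i)^{\alpha_i}\Bigr)^{1/d}=d\cdot\Bigl(\prod_i z_i^{\alpha_i}/\alpha_i^{\alpha_i}\Bigr)^{1/d},
\]
and the constraint $(|f_\alpha|/d)^d\prod_i\alpha_i^{\alpha_i}/z_i^{\alpha_i}=1$ turns the right-hand side into $d\cdot|f_\alpha|/d=|f_\alpha|$. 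Equality requires $z_i/\alpha_i$ to be the same constant for all $i$; combining this with the constraint forces that common value to be $|f_\alpha|/d$, so again $z_i=\alpha_i\cdot|f_\alpha|/d$ is the unique minimizer.

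There is essentially no obstacle; the only subtlety is organizational, namely matching $n+1$ weights summing to $d$ in (1) and $n$ weights summing to $d$ in (2), and remembering that $d$ even lets us identify $(f_\alpha/d)^d$ with $(|f_\alpha|/d)^d$ when extracting the $d$-th root.
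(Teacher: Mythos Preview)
Your proof is correct and uses essentially the same idea as the paper's: both rest on the weighted AM--GM inequality with weights summing to $d$. The only organizational difference is that the paper first reduces (1) to (2) by introducing an auxiliary variable $z_{n+1}$ with exponent $d-|\alpha|$ (which plays the role of your $y$), then proves (2) after a normalizing change of variables; you instead handle (1) directly with the $(n+1)$-term AM--GM and treat (2) in parallel---but the underlying computation is identical.
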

\begin{proof} the optimization problem in (1) is equivalent to the problem of minimizing the function
$\sum_{i=1}^{n+1} z_i$ subject to $z_i>0$ and
$(\frac{|f_{\alpha}|}{d})^d\cdot \prod_{i=1}^n\frac{\alpha_i^{\alpha_i}}{z_i^{\alpha_i}}\cdot \frac{(d-|\alpha|)^{d-|\alpha|}}{z_{n+1}^{d-|\alpha|}} = 1$.
In this way, (1) reduces to (2). The proof of (2) is straightforward, e.g., making the change in variables
$w_i = \frac{z_i d}{\alpha_i |f_{\alpha}|}$ we are reduced to minimizing $\sum_{i=1}^n \alpha_iw_i$ subject to
$\prod_{i=1}^n w_i^{\alpha_i}=1$. Using the relation between the arithmetic and geometric mean yields
\[
	\frac{\sum_{i=1}^n \alpha_iw_i}{d}\ge \root d \of{\prod_{i=1}^n w_i^{\alpha_i}}= 1,
\]
i.e., $\sum_{i=1}^n \alpha_iw_i \ge d$. On the other hand, if we take $w_i=1$, then $\sum_{i=1}^n \alpha_iw_i = |\alpha|=d$.
Thus the minimum occurs at $w_i=1$, i.e., $z_i = \alpha_i \frac{|f_{\alpha}|}{d}$, $i=1,\dots,n$, and the minimum value of
$\sum_{i=1}^n z_i$ is $\sum_{i=1}^n \alpha_i \frac{|f_{\alpha}|}{d} = |\alpha|\frac{|f_{\alpha}|}{d} = |f_{\alpha}|$.
\end{proof}
\begin{rem} \

(1) Suppose $I_1,\dots,I_{\ell}$ and $J_1,\dots,J_m$ are partitions of $\{ 1,\dots,n\}$ with $I_1,\dots,I_{\ell}$ finer
than $J_1,\dots,J_m$,
\[
	G(\lambda) = f-\sum_{p=1}^{\ell}\lambda_p \left(1-\sum_{i\in I_p} (\frac{x_i}{N_i})^d\right), \ H(\mu) =
	f-\sum_{q=1}^m \mu_q\left(1-\sum_{i\in J_q}(\frac{x_i}{N_i})^d\right).
\]
One checks that if $\mu_q = \sum_{I_p\subseteq J_q} \lambda_p,$ then $G(\lambda)_{\operatorname{gp}} \le H(\mu)_{\operatorname{gp}}$.
It follows that $s(f,\mathbf{g}) \le s(f,\mathbf{h})$ where
\[
	\mathbf{g} = \left(1- \sum_{i\in I_1}(\frac{x_i}{N_i})^d,\dots,1- \sum_{i\in I_{\ell}}(\frac{x_i}{N_i})^d\right),
	\quad \mathbf{h} = \left(1- \sum_{i\in J_1}(\frac{x_i}{N_i})^d,\dots,1- \sum_{i\in J_m}(\frac{x_i}{N_i})^d\right).
\]

(2) Similarly, one checks that if
\[
	H(\lambda)= f-\lambda(1-\sum_{i=1}^n (\frac{x_i}{N_i})^d), \ I(\mu) = f-\sum_{j=1}^n \mu_j(\frac{1}{n}-(\frac{x_j}{N_j})^d)
\]
where $\mu_j = \lambda$, $j=1,\dots,n$, then $H(\lambda)=I(\mu)$. It follows that $s(f,\mathbf{h}) \le s(f,\mathbf{i})$ where
\[
	\mathbf{h}=\left(1-\sum_{i=1}^n(\frac{x_i}{N_i})^d\right),\ \mathbf{i}=\left(\frac{1}{n}-(\frac{x_1}{N_1})^d,\dots,\frac{1}{n}-(\frac{x_n}{N_n})^d\right).
\]

(3) In particular, (1) and (2) imply
\[
	s(f,\mathbf{g})\le s(f,\mathbf{h}) \le s(f,\mathbf{i}),
\]
with $\mathbf{g}$ as in Theorem \ref{hypercube case}, $\mathbf{h}$ and $\mathbf{i}$ as in (2). Observe also that, by
Theorem \ref{hypercube case}, $f_{\operatorname{tr},\mathbf{N}} \le s(f,\mathbf{g})$ and
$f_{\operatorname{tr},\mathbf{N}/\root d \of{n}} \le s(f,\mathbf{i})$ with equality holding if $f_{d,i}\le 0$, $i=1,\dots,n$.
This clarifies to some extent an observation made in \cite[Section 3]{gha-lass-mar}.
\end{rem}

\end{document}